\documentclass[10pt]{article}
\usepackage{bbm}
\usepackage{mathrsfs}
\usepackage{amsfonts}
\usepackage{amsthm,amsmath,amssymb,anysize}
\newtheorem{lemma}{Lemma}[section]
\newtheorem{theorem}[lemma]{Theorem}

\newtheorem{definition}[lemma]{Definition}

\setlength{\parindent}{1em} \setlength{\baselineskip}{20pt}
\usepackage[numbers,sort&compress]{natbib}
\pagestyle{myheadings}
\marginsize{3.2cm}{3.2cm}{3.6cm}{3cm}
\numberwithin{equation}{section}

\markright{Cohomology of $\frak{sl}(2)$}
\title{\textsf{ Cohomology of $\frak{sl}(2)$ and its applications in prime characteristic \footnote{supported by Heilongjiang Provincial Natural Science Foundation of China (YQ2020A005) and the Natural Science Foundation  of China (12061029).}}}
\author{\textsc{Shujuan Wang$^{1}$ and
  \textsc{Zhaoxin Li$^{2,}$}}\footnote{Correspondence: LZX15765737480@163.com}\\
  {\small \textit{$^1$Department of Mathematics, Shanghai Maritime University,}}\\
\small \textit{Shanghai 201306, China} \\
 \small\textit{$^2$School of Mathematical Sciences, Heilongjiang University,}\\ \small\textit{Harbin 150080, China} }
\date{ }
\begin{document}
\maketitle
\begin{quotation}
\small\noindent \textbf{Abstract}:
Over a field of characteristic $p>2,$
 the first cohomology of the 3-dimensional simple Lie algebra $\frak{sl}(2)$ with
coefficients in all simple modules  is determined, which implies  Whitehead's first lemma
is not true in prime characteristic.
As applications, derivations and 2-local derivations of $\frak{sl}(2)$ on  any simple module are also be characterized.

 \vspace{0.2cm} \noindent{\textbf{Keywords}}: $\frak{sl}(2)$,  simple modules, the first cohomology, (2-local) derivations.

\vspace{0.1cm} \noindent \textbf{Mathematics Subject Classification
}: 17B40, 17B50, 17B56

\end{quotation}
\setcounter{section}{0}
\section{Introduction}
 Cohomology is an important tool in modern mathematics and theoretical physics.
Recall, for instance, that relative cohomology is fundamental in the Borel-Weil-Bott theory (see \cite{Penkov}),
or that cohomology of nilpotent radicals of parabolic subalgebras is crucial in the Kazhdan-Lusztig theory (see \cite{Brundan}).
Particularly, the first cohomology is closely related to extensions of modules.
Let $\frak{g}$ be a finite-dimensional simple Lie algebra
and $M$ be any finite-dimensional simple $\frak{g}$-module over a field of characteristic 0.
It is Whitehead's first lemma that tells us the first cohomology $\mathrm{H}^1(\frak{g}, M) = 0$
(see \cite[Theorem 7.8.9 or Corollary 7.8.10]{cohomology}).
In this paper, the underlying field $\mathbb{F}$ is assumed to be of characteristic $p>2$.
We determine the first cohomology of the 3-dimensional simple Lie algebra $\frak{sl}(2)$
with coefficients in all simple modules over $\mathbb{F}$ (see Theorem \ref{220205} in Section \ref{1127}),
which implies that Whitehead's first lemma is not true over a field of prime characteristic.
As an application, we also characterize all derivations of $\frak{sl}(2)$ on any simple module over $\mathbb{F}$
(see Theorem \ref{2201091408} in Subsection \ref{1710}).

Motivated by Bre$\check{s}$ar  and Zhao's work on skewsymmetric  biderivations,
generalizing the concept of skewsymmetric biderivations
of Lie algebras from the adjoint module to any module in the paper \cite{zhao},
we  generalize the notion  of 2-local derivations
of a finite-dimensional Lie algebra from the adjoint module to any finite-dimensional module over $\mathbb{F}$,
and characterize all 2-local derivations  of $\frak{sl}(2)$ on  any  simple module (see Theorem \ref{1711} in Subsection \ref{1712}).
In fact, the concept of 2-local derivations  of Lie algebras (on the adjoint module)
is first introduced in 2015 (see \cite{20151,20152}),
which have aroused the interest
of a great many authors in recent years, see \cite{1,2,3} for example.
It is well-known that derivations and 2-local derivations of algebras are influential
and far-reaching, which are very important subjects in the study of both algebras and
their generalizations.
\subsection{The first cohomology of $\frak{g}$ with coefficients in $\frak{g}$-modules}
Let $\frak{g}$ be a finite-dimensional Lie algebra and $M$ a finite-dimensional $\frak{g}$-module in this subsection.
We call a linear map
$\phi: \frak{g}\longrightarrow M$ to be a derivation
of $\frak{g}$ on $M$, if
\begin{eqnarray*}
\phi([x,y])=x\phi(y)-y\phi(x),\forall x,y\in \frak{g}.
\end{eqnarray*}
Denote by $\mathrm{Der}(\frak{g}, M)$ the space consisting of
derivations of $\frak{g}$ on $M$.
For $m\in M$, we call a map $D: \frak{g}\longrightarrow M$
to be an inner derivation determined by $m$, denoted by $D_m$,
if
$$D(x)=x m,\quad \forall x\in \frak{g}.$$
Denote by $\mathrm{Ider}(\frak{g}, M)$ the space consisting of inner derivations of $\frak{g}$ on $M$.
Suppose that
$\mathfrak{h}$ is a Cartan subalgebra of $\frak{g}$ and
$\frak{g}$ and $M$ admit the weight space decompositions with respect to $\mathfrak{h}$:
$$\frak{g}=\oplus_{\alpha\in\mathfrak{h}^{\ast}}\frak{g}_{\alpha},\quad M=\oplus_{\alpha\in\mathfrak{h}^{\ast}}M_{\alpha}.$$
We call a linear map $\phi: \frak{g}\longrightarrow M$ to be a weight-map,
if $\phi(\frak{g}_{\alpha})\subseteq M_{\alpha}, \forall\alpha\in\mathfrak{h}^{\ast}.$
Furthermore, if a weight-map $\phi\in \mathrm{Der}(\frak{g}, M)$,
then $\phi$ is said to be a weight-derivation.
Denote by $\mathrm{Wder}(\frak{g}, M)$ the space consisting of weight-derivations of $\frak{g}$ on $M$.
From \cite[Theorem 1.1]{RF}, the first cohomology of $\frak{g}$  with the
coefficient in the $\frak{g}$-module $M$
\begin{align}\label{1022}
\mathrm{H}^1(\frak{g},M)&=\mathrm{Der}(\frak{g},M)/\mathrm{Ider}(\frak{g},M)\nonumber\\
&=\left(\mathrm{Wder}(\frak{g},M)+\mathrm{Ider}(\frak{g},M)\right)/\mathrm{Ider}(\frak{g},M).
\end{align}

\subsection{Simple modules of $\frak{sl}(2)$}\label{1113}

For any $k\in \{0,2,-2\}$ and $a\in \mathbb{F}_{p},$
let $\Phi(a)$ and $\Phi(a, k)$ be the integer numbers
such that both $\Phi(a)$ and $\Phi(a, k)$ are in $\{0, 1, \cdots, p-1\}$,
and
$$\Phi(a)\equiv a(\mathrm{mod}p),\quad\Phi(a, k)\equiv\frac{a+k}{2}(\mathrm{mod}p).$$
We will follow the reference \cite[Sec. 5.2]{modularliealg} for the representation theory
of  the 3-dimensional simple Lie algebra $\frak{sl}(2)$.
For the convenience of the readers, we summarize some information.
Recall that $\mathfrak{sl}(2)$
has a standard basis:
$$
	h=\begin{pmatrix}
		1& 0\\
		0& -1
	\end{pmatrix},\quad
    e=\begin{pmatrix}
		0& 1\\
		0& 0
	\end{pmatrix},\quad
	f=\begin{pmatrix}
		0& 0\\
		1& 0
	\end{pmatrix}.
$$
$\frak{sl}(2)$ is a restricted Lie algebra with the only $p$-map such that
$$h^{[p]}=h,\quad e^{[p]}=f^{[p]}=0.$$
Denote by $U(\frak{sl}(2))$ the universal enveloping algebra of $\frak{sl}(2)$.
Note that $x^p-x^{[p]}$ lies in the center of $U(\frak{sl}(2))$ for all $x\in \frak{sl}(2)$.
Let $V$ be a simple $\frak{sl}(2)$-module.
Schur's Lemma tells us there exists $\chi\in \frak{sl}(2)^*$,
such that
$$x^pv-x^{[p]}v=\chi(x)^pv, \quad  x\in \frak{sl}(2),\;\; v\in V.$$
In this case we also call $V$  a simple $\frak{sl}(2)$-module with  $p$-character $\chi$.
Fix $\chi\in \frak{sl}(2)^*$.
Denote by $I_{\chi}$  the two-sided ideal of $U(\frak{sl}(2))$ generated by the elements
$x^p-x^{[p]}-\chi(x)^p$
for all $x\in \frak{sl}(2)$.
Write $U_{\chi}(\frak{sl}(2))=U(\frak{sl}(2))/I_{\chi}$,
which is called the reduced enveloping algebra with $p$-character $\chi$.
Note that a simple $\frak{sl}(2)$-module with $p$-character $\chi$
is the same as a simple $U_{\chi}(\frak{sl}(2))$-module.
Any $p$-character $\chi'$ is $\mathrm{SL}(2)$-conjugate to a $p$-character $\chi$ with $\chi(e)=0$
and $U_{\chi'}(\frak{sl}(2))=U_{\chi}(\frak{sl}(2))$.
Therefore the study of simple  $\frak{sl}(2)$-modules
is reduced to a problem of studying simple ones  with  $p$-character $\chi$  when $\chi$ runs over the
representatives of coadjoint $\mathrm{SL}(2)$-orbits in $\frak{sl}(2)^*$.
In this paper we make a convention that the symbol $\chi$ implies $\chi\in \frak{sl}(2)^*$ and $\chi(e)=0$.

Below we recall simple $U_{\chi}(\frak{sl}(2))$-modules constructed in  \cite[p. 207]{modularliealg}.
 Write
 $$\Lambda_{\chi}=\left\{\lambda\in \mathbb{F}\mid \lambda^p-\lambda=\chi(h)^p\right\}.$$
Let $\mathbb{F}v$ be a 1-dimensional vector space and
$hv=\lambda v$.
Then $\mathbb{F}v$ is a $U_{\chi}(\mathbb{F}h)$-module
 if and only if $\lambda\in \Lambda_{\chi}$.
By letting $ev=0$,  $\mathbb{F}v$ can be extended to a 1-dimensional $U_{\chi}(\mathbb{F}h+\mathbb{F}e)$-module, 
which is because $\chi(e)=0$. 
Write
$$ Z_{\chi}(\lambda):=U_{\chi}(\frak{sl}(2))\otimes_{U_{\chi}(\mathbb{F}h+\mathbb{F}e)}\mathbb{F}v,\quad \lambda\in \Lambda_{\chi},$$
which is called the Verma module of $U_{\chi}(\frak{sl}(2))$  with   highest weight $\lambda$ and $p$-character $\chi$.
Then $Z_{\chi}(\lambda)$ has a basis
$$v_0:=1\otimes v,\quad v_{i}:=f^i\otimes v,\;\; i=1,\;2,\;\ldots,\; p-1.$$
Denote by $L_{\chi}(\lambda)$ the unique simple quotient of $ Z_{\chi}(\lambda)$,
which is also of highest weight $\lambda$ and $p$-character $\chi$.
If $v\in Z_{\chi}(\lambda)$, then we also use $v$
to represent the residue class containing $v$ in $L_{\chi}(\lambda)$.
We describe the structure of $L_{\chi}(\lambda)$  in the following:
\begin{itemize}
\item[(1)]
If $\chi(h)\neq0$ or $\chi(f)\neq0$, then $L_{\chi}(\lambda)$
has a standard basis: $v_{0},v_{1},\ldots,v_{p-1}$, with the $\frak{sl}(2)$-action as follows:
   \begin{align*}
   	   ev_i=i(\lambda-i+1)v_{i-1}, \quad hv_i=(\lambda-2i)v_i, \quad fv_i=\left\{\begin{matrix}
   		v_{i+1},&i<p-1\\
   		\chi (f)^pv_0,&i=p-1.
   	\end{matrix}\right.
   \end{align*}
\item[(2)]
If $\chi=0$, then $L_{\chi}(\lambda)$
has a standard basis: $v_{0},v_{1},\ldots,v_{\Phi(\lambda)}$, with the $\frak{sl}(2)$-action as follows:
\begin{align*}
     	ev_i=i(\lambda-i+1)v_{i-1}, \quad hv_i=(\lambda-2i)v_i, \quad fv_i=\left\{\begin{matrix}
     		v_{i+1},&i<\Phi(\lambda)\\
     		0,&i=\Phi(\lambda).
     	\end{matrix}\right.
     \end{align*}
\end{itemize}

\section{$\mathrm{H}^1(\frak{sl}(2), L_{\chi}(\lambda))$}\label{1127}
The following two theorems determine the first cohomology
of $\frak{sl}(2)$ with coefficients in $L_{\chi}(\lambda)$,
which implies that Whitehead's first lemma in \cite[Corollary 7.8.10]{cohomology}
is not true over a field of prime characteristic.
\begin{theorem}\label{220205}
		$$\mathrm{H}^1(\mathfrak{sl}(2), L_{\chi}(\lambda))=\left\{\begin{matrix}
			\mathbb{F}d_{1}+\mathbb{F}d_{2},& (\chi, \lambda)=(0, p-2)  \\
			0,& \mbox{otherwise},
		\end{matrix}\right. $$
where both $d_1$ and $d_2$ are linear maps from $\frak{sl}(2)$ to $L_{0}(p-2)$ such that
\begin{align*}
d_{1}(h)= d_{1}(f)&=0,\quad   d_{1}(e)=v_{p-2},\\
d_{2}(h)= d_{2}(e)&=0,\quad d_{2}(f)=v_{0}.
		\end{align*}
	\end{theorem}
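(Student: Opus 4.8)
The plan is to use the reduction \eqref{1022}, which gives $\mathrm{H}^1(\frak{sl}(2),L_\chi(\lambda))\cong \mathrm{Wder}(\frak{sl}(2),L_\chi(\lambda))\big/\left(\mathrm{Wder}(\frak{sl}(2),L_\chi(\lambda))\cap\mathrm{Ider}(\frak{sl}(2),L_\chi(\lambda))\right)$, so the whole problem reduces to understanding weight-derivations and which of them are inner. Taking $\mathfrak h=\mathbb{F}h$ as Cartan subalgebra, the weights of $\frak{sl}(2)$ on $h,e,f$ are $0,2,-2$ respectively, so a weight-map $\phi$ is determined by $\phi(h)\in M_0$, $\phi(e)\in M_2$, $\phi(f)\in M_{-2}$, where $M:=L_\chi(\lambda)$ and $M_k$ denotes the weight space of weight $k$. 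First I would write out the derivation identity on the three brackets $[h,e]=2e$, $[h,f]=-2f$, $[e,f]=h$. Since $h$ acts as the scalar $2$ on $M_2$ and $-2$ on $M_{-2}$, the first two identities collapse to $e\phi(h)=0$ and $f\phi(h)=0$, while the third reads $\phi(h)=e\phi(f)-f\phi(e)$. Thus $\mathrm{Wder}(\frak{sl}(2),M)$ is exactly the set of triples with $\phi(h)=e\phi(f)-f\phi(e)$ (automatically in $M_0$) subject to $e\phi(h)=f\phi(h)=0$.

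Next I would pin down $\mathrm{Wder}\cap\mathrm{Ider}$. Writing $m=\sum_\mu m_\mu$ in weight components, $D_m$ is a weight-map iff $hm=\sum_\mu\mu\, m_\mu\in M_0$, which forces $m\in M_0$; conversely every $D_m$ with $m\in M_0$ is a weight-derivation. Since $M$ is a nontrivial simple module we have $M^{\frak{sl}(2)}=0$, so $m\mapsto D_m$ is injective and $\dim(\mathrm{Wder}\cap\mathrm{Ider})=\dim M_0$. Consequently $\dim\mathrm{H}^1(\frak{sl}(2),M)=\dim\mathrm{Wder}(\frak{sl}(2),M)-\dim M_0$, and the entire computation becomes a matter of locating the weight spaces $M_0,M_2,M_{-2}$ and evaluating the two constraints on them. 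The trivial module $L_0(0)$ is handled separately: $\frak{sl}(2)$ is perfect, so $\mathrm{H}^1=0$ there.

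I would then run the case analysis dictated by the module structure in Subsection \ref{1113}. If $\chi(h)\neq0$ then $\lambda\notin\mathbb{F}_p$, so all weights $\lambda-2i$ lie in the nontrivial coset $\lambda+\mathbb{F}_p$ and none equals $0,2,-2$; hence $M_0=M_2=M_{-2}=0$, $\mathrm{Wder}=0$, and $\mathrm{H}^1=0$. If $\chi(h)=0$ but $\chi(f)\neq0$, then $M$ is $p$-dimensional with all weights in $\mathbb{F}_p$, so $M_0,M_2,M_{-2}$ are each one-dimensional, spanned by $v_{\Phi(\lambda,0)},v_{\Phi(\lambda,-2)},v_{\Phi(\lambda,2)}$; writing $\phi(e)=bv_{\Phi(\lambda,-2)}$, $\phi(f)=cv_{\Phi(\lambda,2)}$ I would substitute into $\phi(h)=e\phi(f)-f\phi(e)$ and impose $f\phi(h)=0$, which (using $fv_{p-1}=\chi(f)^pv_0$ when the indices wrap around) cuts the two-parameter family $(b,c)$ down to a line, giving $\dim\mathrm{Wder}=1=\dim M_0$ and hence $\mathrm{H}^1=0$. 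Finally, for $\chi=0$ one has $\lambda\in\{0,\dots,p-1\}$ and $\dim M=\lambda+1$; a short check shows $M_0\neq0$ iff $\lambda$ is even, while $M_2,M_{-2}\neq0$ iff $\lambda$ is even and $\geq2$, or $\lambda=p-2$. For odd $\lambda\neq p-2$ all three spaces vanish and $\mathrm{H}^1=0$; for even $\lambda\geq2$ the relation $e\phi(h)=0$ forces $\phi(h)=0$ (since $ev_{\lambda/2}\neq0$), and the remaining constraint again yields $\dim\mathrm{Wder}=1=\dim M_0$, so $\mathrm{H}^1=0$. The exceptional case is $\chi=0$, $\lambda=p-2$: here $M_0=0$ but $M_2=\mathbb{F}v_{p-2}$ and $M_{-2}=\mathbb{F}v_0$, and because $fv_{p-2}=0$ and $ev_0=0$ the constraint $\phi(h)=e\phi(f)-f\phi(e)$ holds identically, so $\phi(e),\phi(f)$ are free; this gives $\dim\mathrm{Wder}=2$, and with $\dim M_0=0$ we obtain $\mathrm{H}^1=\mathbb{F}d_1+\mathbb{F}d_2$ with $d_1,d_2$ as in the statement.

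I expect the main obstacle to be the $p$-dimensional case $\chi(h)=0,\chi(f)\neq0$ (and, to a lesser degree, the even-$\lambda$ restricted case), where the weight spaces can straddle the index wrap-around at $v_{p-1}$ and one must invoke the twisted action $fv_{p-1}=\chi(f)^pv_0$ carefully; bookkeeping the exact indices $\Phi(\lambda,\pm2)$ across the sub-ranges of $\lambda$ is what makes these cases delicate. The conceptual heart, by contrast, is the clean dichotomy that nonvanishing cohomology can occur only when $M_0=0$ (so that no inner derivation is available to absorb a weight-derivation) while $M_{\pm2}\neq0$ with the bracket relation degenerating at the two ends $v_0,v_{p-2}$ of the basis — a configuration that the structure of $L_\chi(\lambda)$ realizes exactly once, at $(\chi,\lambda)=(0,p-2)$.
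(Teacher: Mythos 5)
Your proposal is correct and follows essentially the same route as the paper: both rest on the reduction (\ref{1022}) to weight-derivations, derive the three constraints $e\phi(h)=f\phi(h)=0$ and $\phi(h)=e\phi(f)-f\phi(e)$, and run the same case division over $\chi(h)\neq0$, $\chi=0$ (with subcases on $\lambda$), and $\chi(f)\neq0$. The only difference is organizational: you package the innerness check once and for all via $\mathrm{Wder}\cap\mathrm{Ider}\cong M_0$ and the count $\dim\mathrm{H}^1=\dim\mathrm{Wder}-\dim M_0$, where the paper instead exhibits each surviving weight-derivation explicitly as $cD_{v_{\Phi(\lambda,0)}}$ case by case.
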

\begin{proof}
We shall prove this conclusion by steps.

\textit{Step 1: Let $\chi(h)\neq 0$.}
It follows that $\lambda\notin \mathbb{F}_p$ from $\lambda\in\Lambda_{\chi}$.
Then $L_{\chi}(\lambda)_{\alpha}=0$ for any $\alpha=0,2,-2$,
which implies that $\mathrm{Wder}(\frak{sl}(2), L_{\chi}(\lambda))=0.$
As a result, by Equation (\ref{1022}), we get $\mathrm{H}^1(\frak{sl}(2),L_{\chi}(\lambda))=0$ under this case.

\textit{Step 2: Let $\chi(h)= 0$.}
Since $hv_i=(\lambda-2i)v_{i}$,
then we let $\phi\in \mathrm{Wder}(\frak{sl}(2), L_{\chi}(\lambda))$
such that
$$\phi(h)=av_{\Phi (\lambda ,0)},\quad \phi(e)=bv_{\Phi (\lambda ,-2)},\quad \phi(f)=cv_{\Phi (\lambda ,2)}.$$
For any $\alpha \in (\mathbb{F}h)^{*}$ and $x\in \frak{sl}(2)_{\alpha},$
by $$\phi([h,x])=\phi(\alpha(h)x)=\alpha(h)\phi(x)$$ and
$$h\phi(x)-x\phi(h)=\alpha(h)\phi(x)-x\phi(h),$$ we get $x\phi(h)=0$.
Considering that  $x$ are arbitray and $L_{\chi}(\lambda)$ is a simple $\frak{sl}(2)$-module,
we get $\phi(h)=0$.

\textit{Step 2.1: Let $\chi=0$.}
In this case, $L_0(\lambda)$ has a basis $v_0, v_1, \ldots, v_{\Phi(\lambda)}$.
Write $k\in\{0,2,-2\}$ and $\mathrm{I}=\{0,1,\ldots, \Phi(\lambda)\}$.
Then
$$v_{\Phi(\lambda,k)}\neq 0\Longleftrightarrow \Phi(\lambda,k)\in \mathrm{I}.$$
In fact, for any $k\in \{0,2,-2\}$,
we get
$$\Phi(p-1, k)\in \mathrm{I}, \quad \Phi(2, k)\in \mathrm{I}, \quad \Phi(1, k)\notin \mathrm{I}(p\geq 5),\quad \Phi(p-3, k)\in \mathrm{I}(p\geq 7)$$  and the following conclusions:
\begin{itemize}
\item[(a)]If $p=3$, then
$$
\Phi(0, k)\in \mathrm{I}\Longleftrightarrow k=0;\quad \Phi(1, k)\in \mathrm{I}\Longleftrightarrow k=\pm2.
$$
\item[(b)]If $p\geq5$, then
$$\Phi(0, k)\in \mathrm{I}\Longleftrightarrow k=0; \quad
\Phi(p-2, k)\in \mathrm{I}\Longleftrightarrow k=\pm2.$$
\item[(c)]If $p\geq7$ and $3\leq\Phi(\lambda)<p-3$, then
$$\Phi(\lambda, k)\in\mathrm{I}\Longleftrightarrow \Phi(\lambda)\mbox{\; is even}.$$
\end{itemize}
In the following, we shall determine $\phi$.
\begin{itemize}
\item If $\lambda=0$, then $\Phi(\lambda, k)\notin \mathrm{I}$ for $k=\pm2$.
The fact $\phi(e)=\phi(f)=0$ implies $\phi([e,f])=0$,
that is, $\phi(h)=0$.
It follows that $\phi=0.$
As a result, by Equation (\ref{1022}),
we get $\mathrm{H}^1(\frak{sl}(2),L_{0}(\lambda))=0$ under this case.
	
\item If $\lambda=1$ with $p\geq 5$; or $3\leq \Phi(\lambda)< p-3$ is odd with $p\geq 7$,
then $\Phi(\lambda, k)\notin \mathrm{I}$ for $k=0,\pm2$. It follows that $\phi=0.$
As a result, by Equation (\ref{1022}), we get $\mathrm{H}^1(\frak{sl}(2),L_{0}(\lambda))=0$ under this case.

\item If $\lambda=p-2$,
then the fact  $\phi(h)=0$  implies $\phi=bd_{1}+cd_{2}$.
By a direct computation, we get that both $d_{1}$ and $d_{2}$ are lie in $\mathrm{Wder}(\frak{sl}(2), L_{0}(p-2))$.
Claim that $bd_1+cd_2$ is inner if and only if $b=c=0$.
In fact, if $bd_1+cd_2=D_v$ for $v=\sum^{p-2}_{i=0}a_iv_i\in L_{0}(p-2)$,
then
$$
0=(bd_1+cd_2)(h)=D_v(h)=h\left(\sum^{p-2}_{i=0}a_iv_i\right)=\sum^{p-2}_{i=0}a_i(-2-2i)v_i.
$$
It follows that $a_i=0$ from $-2-2i\neq 0$ for $i=0,1,\ldots, p-2$,
which forces $v=0$, and then $b=c=0$.
As a result, by Equation (\ref{1022}), we get
$$\mathrm{H}^1(\frak{sl}(2),L_{0}(p-2))=\mathbb{F}d_1+\mathbb{F}d_2.$$

\item If $ \lambda=2$; or $\lambda\in\{p-1,p-3\}$ with $p\geq 5$; or $3\leq \Phi (\lambda )< p-3$ is even with $p\geq7$,
then $\Phi(\lambda, k)\in \mathrm{I}$ for $k=0,\pm2$.
The fact $\phi(h)=0$ implies $\phi([e,f])=0$.
From
		$$ e\phi(f)-f\phi(e)=cev_{\Phi(\lambda,2)}-bfv_{\Phi(\lambda,-2)}=c\frac{\lambda }{2}\left(\frac{\lambda }{2}+1\right)v_{\Phi(\lambda ,0 )}-bv_{\Phi(\lambda ,0 )},$$
it follows that $ b=\frac{\lambda }{2}\left(\frac{\lambda }{2}+1\right)c, $
which implies that $ \phi=cD_{v_{\Phi (\lambda ,0)}}. $
As a result, by Equation (\ref{1022}), we get $\mathrm{H}^1(\frak{sl}(2),L_{0}(\lambda))=0$ under this case.
\end{itemize}

\textit{Step 2.2: Let  $\chi(f)\neq 0$.}
In this case, $L_{\chi}(\lambda)$ has a basis $v_0, v_1, \ldots, v_{p-1}$.
The fact $\phi(h)=0$ implies $\phi([e,f])=0$.
That is,
		\begin{align*}
			0=e\phi(f)-f\phi(e)&=cev_{\Phi(\lambda,2)}-bfv_{\Phi(\lambda,-2)}\\
			&=\left\{\begin{matrix}
   		\left(-b+c\frac{\lambda}2\left(\frac{\lambda}{2}+1\right)\right)v_{\Phi(\lambda,0)},&\lambda\neq 0\\
   		-b\chi{(f)}^pv_0,&\lambda=0.
   	\end{matrix}\right.
		\end{align*}
Hence
$b=\frac{\lambda}{2}\left(\frac{\lambda}{2}+1\right)c$,
that is $\phi=D_{v_{\Phi{(\lambda,0)}}}$.
As a result, by Equation (\ref{1022}), we get $\mathrm{H}^1(\frak{sl}(2),L_{0}(\lambda))=0$ under this case.
\end{proof}

\section{Applications}
In this section, we shall determine all derivations and 2-local derivations of $\frak{sl}(2)$ on any $L_{\chi}(\lambda)$.

\subsection{Derivations}\label{1710}
By Equation (\ref{1022}) and Theorem \ref{220205}, we have the following theorem,
which determines all derivations of $\frak{sl}(2)$ on any $L_{\chi}(\lambda)$.
\begin{theorem}\label{2201091408}
$$\mathrm{Der}(\frak{sl}(2), L_{\chi}(\lambda))=\left\{\begin{matrix}
   		\mathrm{Ider}(\frak{sl}(2), L_{\chi}(\lambda))+\mathbb{F}d_1+\mathbb{F}d_2,&(\chi, \lambda)=(0, p-2)\\
   		\mathrm{Ider}(\frak{sl}(2), L_{\chi}(\lambda)),& \mbox{otherwise}.
   	\end{matrix}\right.$$

\end{theorem}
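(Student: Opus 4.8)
The plan is to obtain this statement directly from the first cohomology computation in Theorem \ref{220205}, by translating the quotient description of $\mathrm{H}^1$ back into a description of the full derivation space. Recall from Equation (\ref{1022}) that $\mathrm{H}^1(\frak{sl}(2), L_{\chi}(\lambda)) = \mathrm{Der}(\frak{sl}(2), L_{\chi}(\lambda))/\mathrm{Ider}(\frak{sl}(2), L_{\chi}(\lambda))$. Thus, to recover $\mathrm{Der}$ it suffices, for each pair $(\chi, \lambda)$, to exhibit a family of honest derivations whose images in the quotient span $\mathrm{H}^1$; the full derivation space is then the sum of $\mathrm{Ider}(\frak{sl}(2), L_{\chi}(\lambda))$ with the $\mathbb{F}$-span of these representatives. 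The argument therefore splits along exactly the two cases appearing in Theorem \ref{220205}.

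First I would dispose of the generic case. For every $(\chi,\lambda) \neq (0, p-2)$, Theorem \ref{220205} gives $\mathrm{H}^1(\frak{sl}(2), L_{\chi}(\lambda)) = 0$, so the quotient $\mathrm{Der}/\mathrm{Ider}$ is trivial and hence $\mathrm{Der}(\frak{sl}(2), L_{\chi}(\lambda)) = \mathrm{Ider}(\frak{sl}(2), L_{\chi}(\lambda))$, which is precisely the second line of the claim. Next I would treat the distinguished pair $(\chi, \lambda) = (0, p-2)$. Here Theorem \ref{220205} asserts $\mathrm{H}^1(\frak{sl}(2), L_{0}(p-2)) = \mathbb{F}d_1 + \mathbb{F}d_2$, where the maps $d_1, d_2$ were already verified to lie in $\mathrm{Wder}(\frak{sl}(2), L_0(p-2)) \subseteq \mathrm{Der}(\frak{sl}(2), L_0(p-2))$ during the proof of that theorem. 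Given any $\phi \in \mathrm{Der}(\frak{sl}(2), L_0(p-2))$, its image in $\mathrm{H}^1$ equals the image of some $bd_1 + cd_2$ with $b,c \in \mathbb{F}$, so that $\phi - (bd_1 + cd_2) \in \mathrm{Ider}(\frak{sl}(2), L_0(p-2))$, giving $\phi \in \mathrm{Ider}(\frak{sl}(2), L_0(p-2)) + \mathbb{F}d_1 + \mathbb{F}d_2$. The reverse containment is immediate because each of the three summands consists of derivations. This yields the first line of the claim.

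The proof is essentially a reformulation of Theorem \ref{220205}, so no genuine obstacle remains; the only points requiring a moment's care are that $d_1$ and $d_2$ must be legitimate derivations (so that they serve as actual coset representatives inside $\mathrm{Der}$, rather than merely abstract cohomology classes) and that their classes remain $\mathbb{F}$-linearly independent modulo $\mathrm{Ider}(\frak{sl}(2), L_0(p-2))$, so that the stated sum is not redundant. Both facts are already established in the proof of Theorem \ref{220205}: the first by the direct verification that $d_1, d_2 \in \mathrm{Wder}$, and the second by the computation showing that $bd_1 + cd_2$ is inner only when $b = c = 0$. Consequently the theorem follows by combining these observations with Equation (\ref{1022}).
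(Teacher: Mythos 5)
Your proposal is correct and matches the paper's approach exactly: the paper also obtains this theorem directly from Equation (\ref{1022}) together with Theorem \ref{220205}, with the verification that $d_1, d_2$ are genuine derivations and independent modulo inner derivations already contained in the proof of that theorem. Your write-up simply makes explicit the routine unwinding of the quotient that the paper leaves implicit.
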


\subsection{2-local derivations}\label{1712}

In the following, we introduce the definition
of 2-local derivations  of a finite-dimensional Lie algebra on any finite-dimensional module,
which generalizes the usual ones from the adjoint module to any finite-dimensional module.
\begin{definition}
Let $\frak{g}$ be a finite-dimensional Lie algebra, $M$ a finite-dimensional $\frak{g}$-module
and $\varphi$   a map from $\frak{g}$ to $M$.
If for any $x, y\in \frak{g}$, there exists $D_{x,y}\in \mathrm {Der} (\frak{g},M)$ {\rm(}depending on $x, y${\rm)},
such that
$$\varphi(x)=D_{x,y}(x),\quad \varphi (y)=D_{x,y}(y),$$
then $\varphi$ is said to be a 2-local derivation of $\frak{g}$ on $M$.
\end{definition}

Below we give a technical lemma which will be used in the future.

\begin{lemma}\label{202202101608}
Let $\frak{g}$ be a finite-dimensional  Lie algebra, $M$ a nontrivial and simple $\frak{g}$-module of finite dimension. Then
		$$
		a_{1}D_{u_{1}}+a_{2}D_{u_{2}}+\cdots +a_{k}D_{u_{k}}=0
		\Longleftrightarrow a_{1}u_{1}+a_{2}u_{2}+\cdots +a_{k}u_{k}=0,
		$$
		where $u_{1},u_{2},\ldots,u_{k} \in M, a_{1},a_{2},\ldots,a_{k} \in  \mathbb{F}$.
		Particularly,  	
		\begin{center}
			$u_{1},\ldots,u_{k}$ is a basis of $M$ if and only if $D_{u_{1}},\ldots ,D_{u_{k}}$ is a basis of $\mathrm{Ider}(\frak{g},M)$.
		\end{center}

\end{lemma}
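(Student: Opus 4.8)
The plan is to prove the two-sided implication directly from the definition of an inner derivation. Recall that for $u \in M$, the inner derivation $D_u$ is the map $x \mapsto xu$. Hence the linear combination $a_1 D_{u_1} + \cdots + a_k D_{u_k}$ is itself the map sending $x \in \mathfrak{g}$ to $x(a_1 u_1 + \cdots + a_k u_k)$; that is, it equals $D_{a_1 u_1 + \cdots + a_k u_k}$ since $D$ is linear in its subscript. Therefore the whole statement reduces to the single claim
\begin{align*}
D_w = 0 \Longleftrightarrow w = 0, \quad \text{where } w := a_1 u_1 + \cdots + a_k u_k \in M.
\end{align*}

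The backward direction is immediate: if $w = 0$ then $D_w(x) = xw = 0$ for all $x$, so $D_w = 0$. The forward direction is the substantive one. Suppose $D_w = 0$, i.e. $xw = 0$ for every $x \in \mathfrak{g}$. Then $w$ is annihilated by all of $\mathfrak{g}$, so $\mathbb{F}w$ is a $\mathfrak{g}$-submodule on which $\mathfrak{g}$ acts trivially. The plan is to invoke simplicity of $M$: the annihilated set $\{m \in M : \mathfrak{g}m = 0\}$ is a submodule of $M$, and since $M$ is simple it is either $0$ or all of $M$. If it were all of $M$, then $\mathfrak{g}$ would act as zero on $M$, making $M$ a trivial module — contradicting the hypothesis that $M$ is nontrivial. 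Hence the annihilated submodule is $0$, forcing $w = 0$. This is where the two hypotheses (simple and nontrivial) are both essential, and it is the only genuine content of the lemma; the rest is bookkeeping about linearity of $D$.

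For the final ``particularly'' clause, I would argue that $D : M \to \mathrm{Ider}(\mathfrak{g}, M)$, $w \mapsto D_w$, is a surjective linear map (surjective by definition of $\mathrm{Ider}$, linear by the computation above), and the equivalence just established says precisely that its kernel is $0$, so $D$ is a linear isomorphism. An isomorphism sends a basis to a basis, so $u_1, \ldots, u_k$ is a basis of $M$ exactly when $D_{u_1}, \ldots, D_{u_k}$ is a basis of $\mathrm{Ider}(\mathfrak{g}, M)$. I do not anticipate a serious obstacle; the one point requiring care is the appeal to nontriviality to rule out the degenerate case, which is exactly the hypothesis that prevents $D$ from being the zero map.
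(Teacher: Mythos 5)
Your proposal is correct and follows essentially the same route as the paper: both reduce the statement to the identity $a_1D_{u_1}+\cdots+a_kD_{u_k}=D_{a_1u_1+\cdots+a_ku_k}$ and then use that the invariants $M^{\frak{g}}=\{m\in M\mid xm=0,\ \forall x\in\frak{g}\}$ vanish because $M$ is simple and nontrivial. Your explicit treatment of the ``particularly'' clause via the isomorphism $w\mapsto D_w$ is a small addition the paper leaves implicit, but the substance is identical.
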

\begin{proof}
	Let $ x\in \frak{g}$. Then
	\begin{align*}
	&(a_{1}D_{u_{1}}+a_{2}D_{u_{2}}+\cdots +a_{k}D_{u_{k}})(x)\\
    &=a_{1}D_{u_{1}}(x)+a_{2}D_{u_{2}}(x)+\cdots +a_{k}D_{u_{k}}(x)\\
	&=a_{1}xu_{1}+a_{2}xu_{2}+\cdots +a_{k}xu_{k}\\
	&=x(a_{1}u_{1}+a_{2}u_{2}+\cdots +a_{k}u_{k})\\
	&=D_{a_{1}u_{1}+a_{2}u_{2}+\cdots +a_{k}u_{k}}(x).
    \end{align*}
That is,
$$
	a_{1}D_{u_{1}}+a_{2}D_{u_{2}}+\cdots +a_{k}D_{u_{k}}=D_{a_{1}u_{1}+a_{2}u_{2}+\cdots +a_{k}u_{k}},
 $$
which implies the ``if" direction.

Let $a_{1}D_{u_{1}}+a_{2}D_{u_{2}}+\cdots +a_{k}D_{u_{k}}=0$.
For any $x\in \frak{g}$, we have
  $$
  (a_{1}D_{u_{1}}+a_{2}D_{u_{2}}+\cdots +a_{k}D_{u_{k}})(x)=0,
  $$
then
  $$
 D_{a_{1}u_{1}+a_{2}u_{2}+\cdots +a_{k}u_{k}}(x) =x(a_{1}u_{1}+a_{2}u_{2}+\cdots +a_{k}u_{k}).
  $$
It follows that
  $a_{1}u_{1}+a_{2}u_{2}+\cdots +a_{k}u_{k} \in M^{\frak{g}},$
 where $ M^{\frak{g}}:=\{m\in M\mid xm=0, \forall x\in \frak{g}\}.$
Since $M$ is nontrivial and simple,  we get $ M^{\frak{g}}=0$.
 Then
  $$a_{1}u_{1}+a_{2}u_{2}+\cdots +a_{k}u_{k}=0.$$
\end{proof}

Now we are in the position to give the main result on 2-local derivations of $\frak{sl}(2)$ on any $L_{\chi}(\lambda)$.
\begin{theorem}\label{1711}
Let $\varphi$ be a 2-local derivation of $\frak{sl}(2)$ on any $L_{\chi}(\lambda)$.
\begin{itemize}
\item[(1)] If $(\chi, p,\lambda)\neq (0,3,1)$,
then $\varphi$ is a derivation.
\item[(2)] If $(\chi, p,\lambda)= (0,3,1)$,
then for any $x=x_ee+x_ff+x_hh\in \frak{sl}(2)$
with $x_e,x_f,x_h\in \mathbb{F}$, there exist a derivation $D\in\mathrm{Der}(\frak{sl}(2),L_{\chi}(\lambda))$
and two maps $m(x), n(x):  \frak{sl}(2)\longrightarrow\mathbb{F}$,
such that
$$\varphi(x)=D+\left\{\begin{matrix}
   		0,&x_ex_f=0\\
   		m(x)v_0+n(x)v_1,& x_ex_f\neq 0.
   	\end{matrix}\right.$$
\end{itemize}
\end{theorem}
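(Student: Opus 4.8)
The plan is to leverage the complete description of $\mathrm{Der}(\mathfrak{sl}(2),L_\chi(\lambda))$ from Theorem \ref{220205} together with the linear dictionary of Lemma \ref{202202101608}, and to reduce the whole problem to the situation in which $\varphi$ annihilates the standard basis. The key starting observation is that if $D_0$ is \emph{any} genuine derivation, then $\varphi-D_0$ is again a $2$-local derivation: for a pair $(x,y)$ we have $\varphi=D_{x,y}$ on $\{x,y\}$, so $\varphi-D_0=D_{x,y}-D_0$ on $\{x,y\}$ and $D_{x,y}-D_0\in\mathrm{Der}(\mathfrak{sl}(2),L_\chi(\lambda))$. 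Hence it suffices first to manufacture a single derivation $D_0$ agreeing with $\varphi$ on $e,f,h$, and then to understand $2$-local derivations that vanish on $e,f,h$.

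First I would build $D_0$. Feeding the pairs $\{e,f\}$, $\{e,h\}$, $\{f,h\}$ into the $2$-local hypothesis produces three derivations; writing each in the normal form of Theorem \ref{220205} (an inner part $D_m$, plus a combination of $d_1,d_2$ in the exceptional case $(\chi,\lambda)=(0,p-2)$) and comparing weight components shows that the induced data on $e,f,h$ is consistent. On every nonzero weight the value is forced, since $h$ is invertible there; the only delicate point is the zero-weight component, whose agreement across the three choices follows by comparing the three derivations on the overlapping basis vectors via Lemma \ref{202202101608}, which converts relations among inner derivations into relations among module vectors. This yields $D_0$ with $D_0(e)=\varphi(e)$, $D_0(f)=\varphi(f)$, $D_0(h)=\varphi(h)$, and after replacing $\varphi$ by $\psi:=\varphi-D_0$ we may assume $\psi(e)=\psi(f)=\psi(h)=0$.

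The heart of the argument is to evaluate $\psi$ on a general $x=x_ee+x_ff+x_hh$ by testing the three pairs $(x,e)$, $(x,f)$, $(x,h)$. For each basis vector $y$ the hypothesis gives a derivation $D$ with $D(y)=0$ and $D(x)=\psi(x)$; imposing $D(y)=0$ confines the inner part to $\{m:ym=0\}$ (and kills one of $d_1,d_2$ in the exceptional case), forcing $\psi(x)=D(x)$ into a small, explicitly computable set of weight spaces. Concretely, when $(\chi,\lambda)=(0,p-2)$ the element $h$ acts invertibly on $L_0(p-2)$, so the pair $(x,h)$ gives $\psi(x)=cx_fv_0+bx_ev_{p-2}$ for some $b,c\in\mathbb{F}$, hence $\psi(x)\in\mathbb{F}v_0+\mathbb{F}v_{p-2}$; the pair $(x,e)$ forces $\psi(x)\in\mathbb{F}v_0+\mathbb{F}v_1$; and the pair $(x,f)$ forces $\psi(x)\in\mathbb{F}v_{p-3}+\mathbb{F}v_{p-2}$. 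In the remaining (purely inner) cases of Theorem \ref{220205} the same mechanism applies, with the three supports now governed by $\{m:em=0\}$, $\{m:fm=0\}$ and the zero-weight space $\{m:hm=0\}$.

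Intersecting the three supports both finishes the proof and explains the dichotomy, and this bookkeeping is the step I expect to be the main obstacle. For $p\geq 5$ one has $\{v_0,v_{p-2}\}\cap\{v_0,v_1\}=\{v_0\}$ and then $\{v_0\}\cap\{v_{p-3},v_{p-2}\}=\varnothing$, so $\psi(x)=0$ for every $x$ and $\varphi=D_0$ is a derivation; the triviality of the corresponding intersection in the inner cases gives part (1). When $(\chi,p,\lambda)=(0,3,1)=(0,3,p-2)$ the module $L_0(1)$ is two-dimensional and all three supports collapse onto $\mathbb{F}v_0+\mathbb{F}v_1$, so no cancellation occurs; tracking the exact coefficients shows that the pair $(x,h)$ together with one of $(x,e),(x,f)$ forces $\psi(x)=0$ as soon as $x_ex_f=0$, whereas for $x_ex_f\neq 0$ every value of $\psi(x)$ is admissible, recorded as the free term $m(x)v_0+n(x)v_1$ of part (2). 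The real work, then, is the case-by-case verification, across the module structures of Subsection \ref{1113}, that the three weight-supports meet only in $0$ for all $(\chi,\lambda)\neq(0,p-2)$ and for $(0,p-2)$ with $p\geq 5$, together with the precise coefficient identities special to $p=3$ that produce the exceptional family.
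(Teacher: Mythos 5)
Your proposal is correct and follows essentially the same route as the paper: normalizing $\varphi$ so that it vanishes on $e,f,h$ (your construction of $D_0$ is the paper's subtraction of $D_{e,f}$ followed by its Claim~1 that $\varphi_1(h)=0$), and then pinning down $\varphi_1(x)$ by testing the pairs $(x,e)$, $(x,f)$, $(x,h)$ and intersecting the resulting constraint subspaces, which is exactly the content of the paper's Equations (\ref{1435}), (\ref{1436}), (\ref{2206}) and its Steps 3.1--3.6. The support computations you give in the $(\chi,\lambda)=(0,p-2)$ case and the $x_ex_f=0$ dichotomy at $p=3$ match the paper's.
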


\begin{proof}
For any fixed $x',y'\in \frak{sl}(2)$,
then by definition, there exists a derivation $D_{x',y'}: \mathfrak{sl}(2)\rightarrow L_{\chi}(\lambda)$,
such that $\varphi(x')=D_{x',y'}(x')$ and $\varphi(y')=D_{x',y'}(y')$.
Write $\varphi_1=\varphi-D_{x',y'}$.
Then $\varphi_1$ is also a 2-local derivation of $\mathfrak{sl}(2)$ on $L_{\chi}(\lambda)$,
and
\begin{equation}\label{202202101637}
\varphi_1(x')=\varphi_1(y')=0.
\end{equation}
In the following we shall prove by steps that
there exist maps $m(x)$ and $n(x)$ from $\frak{sl}(2)$ to $\mathbb{F}$, such that
$$\varphi_1(x)=\left\{\begin{matrix}
   		m(x)v_0+n(x)v_1,& x_ex_f\neq 0\;\mbox{and}\;(\chi, p,\lambda)= (0,3,1)\\
   		0,&\mbox{otherwise}
   	\end{matrix}\right.$$
   for any $x=x_ee+x_ff+x_hh\in \frak{sl}(2)$ with $x_e,x_f,x_h\in\mathbb{F}$.

\textit{Step 1: Let $\chi(h)\neq 0$.}
Let $x'=h$ and $y'=f$ in Equation (\ref{202202101637}).
Then for any $x \in \mathfrak{sl}(2)$,
by definition there exists a derivation $D_{x,h}$,
letting $\sum_{i=0}^{p-1}a_i(x,h)D_{v_i}$ by Theorem \ref{2201091408}, Lemma \ref{202202101608} and $\chi(h)\neq 0$, such that
		\begin{align*}
			\varphi_1(h)=\sum_{i=0}^{p-1}a_i(x,h)D_{v_i}(h)=\sum_{i=0}^{p-1}a_i(x,h)(\lambda-2i)v_i=0.
		\end{align*}
It follows that  $\lambda \notin \mathbb{F}_p$ from $\chi(h)\neq 0$.
Then each $a_i(x,h)=0$,
which implies $\varphi_1(x)=0$.

\textit{Step 2: Let $\chi(e)=\chi(h)=0$ and $\chi(f)\neq 0$.}
Let $x'=h$ and $y'=f$ in Equation (\ref{202202101637}).
Then for any $x \in \mathfrak{sl}(2)$,
by definition  there exists a derivation $D_{x,f}$,
letting $\sum_{i=0}^{p-1}a_i(x,f)D_{v_i}$ by Theorem \ref{2201091408}, Lemma \ref{202202101608} and $\chi(f)\neq 0$, such that
		\begin{align*}
			\varphi_1(f)=\sum_{i=0}^{p-1}a_i(x,f)D_{v_i}(f)=\sum_{i=0}^{p-2}a_i(x,f)v_{i+1}+a_{p-1}(x,f)\chi(f)^pv_0=0.
		\end{align*}
It follows that each $a_i(x,f)=0$,
which implies $\varphi_1(x)=0$.

\textit{Step 3: Let $\chi=0$.}
By letting $x'=e$ and $y'=f$ in Equation (\ref{202202101637}),
then we may assume that $\varphi_1(e)=\varphi_1(f)=0.$
Again by definition, for any fixed $x,y \in \mathfrak{sl}(2)$,
 there exists a derivation $D_{x,y}: \mathfrak{sl}(2)\rightarrow L_{\chi}(\lambda)$,
such that $\varphi_1(x)=D_{x,y}(x)$ and $\varphi_1(y)=D_{x,y}(y)$.
By Theorem \ref{2201091408}, Lemma \ref{202202101608} and $\chi= 0$,
we may set
		\begin{align*}
			D_{x,y}=\sum_{i=0}^{ \Phi(\lambda )}a_i(x,y)D_{v_i}+\delta _{\Phi(\lambda),p-2}\sum_{j=1}^{2}b_j(x,y)d_j,
		\end{align*}
where $d_1$ and $d_2$ are defined in Theorem \ref{220205}.
Hereafter, the symbol $\delta _{i,j}$ is the Kronecker Symbol.

\textbf{Claim 1: $\varphi_1(h)=0$.}
In fact, on one hand,
\begin{align*}
			\varphi_1(h)=D_{h,e}(h)&=\sum_{i=0}^{ \Phi(\lambda )}a_i(h,e)D_{v_i}(h)+\delta _{\Phi(\lambda),p-2}\sum_{j=1}^{2}b_j(h,e)d_j(h)\\
			&=\sum_{i=0}^{ \Phi(\lambda )}a_i(h,e)(\lambda-2i)v_i,\\
			\varphi_1(e)=D_{h,e}(e)&=\sum_{i=0}^{ \Phi(\lambda )}a_i(h,e)D_{v_i}(e)+\delta _{\Phi(\lambda),p-2}\sum_{j=1}^{2}b_j(h,e)d_j(e)\\
			&=\sum_{i=0}^{ \Phi(\lambda )}a_i(h,e)i(\lambda-i+1)v_{i-1}+\delta _{\Phi(\lambda),p-2}b_1(h,e)v_{p-2}.
\end{align*}
Then $a_i(h,e)=0$ for $1\leq i\leq \Phi(\lambda )$ and $\delta _{\Phi(\lambda),p-2}b_1(h,e)=0$, since $\varphi_1(e)=0$.
As a result,
		\begin{equation}\label{1922}
			\varphi_1(h)=a_0(h,e)\lambda v_0.
		\end{equation}
On the other hand,
\begin{align*}
			\varphi_1(h)=D_{h,f}(h)&=\sum_{i=0}^{ \Phi(\lambda )}a_i(h,f)(\lambda-2i)v_i,\\
			\varphi_1(f)=D_{h,f}(f)&=\sum_{i=0}^{ \Phi(\lambda )}a_i(h,f)D_{v_i}(f)+\delta _{\Phi(\lambda),p-2}\sum_{j=1}^{2}b_j(h,f)d_j(f)\\
			&=\sum_{i=0}^{ \Phi(\lambda )}a_i(h,f)v_{i+1}+\delta _{\Phi(\lambda),p-2}b_2(h,f)v_0.
		\end{align*}
Then $a_i(h,f)=0$ for $0\leq i\leq \Phi(\lambda )-1$ and $\delta_{\Phi(\lambda), p-2}b_2(h,f)=0$, since $\varphi_1(f)=0$.
As a result,
		\begin{equation}\label{1923}
			\varphi_1(h)=-\lambda a _{\Phi(\lambda)}(h,f)v_{\Phi(\lambda )}.
		\end{equation}
From Equations (\ref{1922}) and (\ref{1923}), it follows that
		\begin{align*}
			a_0(h,e)\lambda v_0=-\lambda a _{\Phi(\lambda)}(h,f)v_{\Phi(\lambda )}.
		\end{align*}
If $\Phi(\lambda )=0$, then by Equation (\ref{1922}) or (\ref{1923}),
we get $\varphi_1(h)=0$.
If $\Phi(\lambda ) \neq0$, then $a_0(h,e)=a _{\Phi(\lambda)}(h,f)=0$,
which implies $\varphi_1(h)=0$.
Consequently, we prove Claim 1.

Let $x=x_ee+x_hh+x_ff \in \mathfrak{sl}(2)$ with $x_e,x_h,x_f\in \mathbb{F}$.
On one hand,
		\begin{align*}
			\varphi_1(e)=D_{x,e}(e)=\sum_{i=0}^{\Phi(\lambda)}a_i(x,e)i(\lambda+1-i)v_{i-1}+\delta _{\Phi(\lambda),p-2}b_1(x,e)v_{p-2}=0.
		\end{align*}
It follows that  $a_i(x,e)=0$ for $1\leq i \leq \Phi(\lambda)$ and $\delta _{\Phi(\lambda), p-2}b_1(x,e)=0$.
Then
		\begin{equation}\label{1435}
			\begin{aligned}
			\varphi_1(x)&=a_0(x,e)D_{v_0}(x)+\delta _{\Phi(\lambda),p-2}b_2(x,e)d_2(x)\\
			&=a_0(x,e)D_{v_0}(x_hh+x_ff)+\delta _{\Phi(\lambda),p-2}b_2(x,e)d_2(x_hh+x_ff)\\
			&=a_0(x,e)(x_h\lambda v_0+x_fv_1)+\delta _{\Phi(\lambda),p-2}b_2(x,e)x_fv_0.
			\end{aligned}
		\end{equation}
On the other hand,
\begin{align*}
			\varphi_1(f)=D_{x,f}(f)=\sum_{i=0}^{\Phi(\lambda)}a_i(x,f)v_{i+1}+\delta _{\Phi(\lambda),p-2}b_2(x,f)v_{0}=0.
		\end{align*}
It follows that $a_i(x,f)=0$ for $0\leq i \leq \Phi(\lambda)-1$  and $\delta _{\Phi(\lambda),p-2}b_2(x,f)=0$.
Then by definition, we get
		\begin{equation}\label{1436}
			\begin{aligned}
			\varphi_1(x)&=a_{\Phi(\lambda)}(x,f)D_{v_{\Phi(\lambda)}}(x)+\delta _{\Phi(\lambda),p-2}b_1(x,f)d_1(x)\\
			&=a_{\Phi(\lambda)}(x,f)D_{v_{\Phi(\lambda)}}(x_ee+x_hh)+\delta _{\Phi(\lambda),p-2}b_1(x,f)d_1(x_ee+x_hh)\\
			&=a_{\Phi(\lambda)}(x,f)(x_e\lambda v_{\Phi(\lambda)-1}-x_h\lambda v_{\Phi(\lambda)})+\delta _{\Phi(\lambda),p-2}b_1(x,f)x_ev_{p-2}.
			\end{aligned}
		\end{equation}
From Equations (\ref{1435}) and (\ref{1436}), we get
	  \begin{equation}\label{935}
	  		\begin{aligned}
		&a_0(x,e)(\lambda x_hv_0+x_fv_1)+\delta _{\Phi(\lambda),p-2}b_2(x,e)x_fv_0\\
		&=a_{\Phi(\lambda)}(x,f)(\lambda x_ev_{\Phi(\lambda)-1}-\lambda x_hv_{\Phi(\lambda)})+\delta _{\Phi(\lambda),p-2}b_1(x,f)x_ev_{p-2}.
			\end{aligned}
	  \end{equation}

\textit{Step 3.1: Let $\Phi(\lambda)=1$ and $p=3$.}
Then by Equation (\ref{935}), we have
\begin{align*}
(a_0(x,e)x_h+b_2(x,e)x_f)v_0+a_0(x,e)x_fv_1
=a_1(x,f)x_ev_0+(b_1(x,f)x_e-a_1(x,f)x_h)v_1.
\end{align*}
On one hand,
		\begin{align*}
			\varphi_1(h)=D_{x,h}(h)&=\sum_{i=0}^{1}a_i(x,h)D_{v_i}(h)+\sum_{j=1}^{2}b_j(x,h)d_j(h)\\
			&=\sum_{i=0}^{1}a_i(x,h)(1-2i)v_i=0
		\end{align*}
Then $a_0(x,h)=a_1(x,h)=0$, since $\varphi_1(h)=0$.
It follows that
\begin{equation}\label{2206}
			\begin{aligned}
				\varphi_1(x)&=D_{x,h}(x)=\sum_{j=1}^{2}b_j(x,h)d_j(x)\\
				&=b_1(x,h)(x_ev_1)+b_2(x,h)(x_fv_0).
			\end{aligned}
		\end{equation}
By Equations (\ref{1435}), (\ref{1436}) and (\ref{2206}), we get
\begin{align*}
			\varphi_1(x)&=a_1(x,f)x_ev_0+b_1(x,h)x_ev_1\\
			&=a_0(x,e)x_fv_1+b_2(x,h)x_fv_0.
		\end{align*}
As a result,
$$\varphi_1(x)=\left\{\begin{matrix}
   		0,&x_ex_f=0\\
   		m(x)v_0+n(x)v_1,& x_ex_f\neq 0,
   	\end{matrix}\right.$$
   where $$m(x)=b_2(x,h)x_f=a_1(x,f)x_e,\quad n(x)=a_0(x,e)x_f=b_1(x,h)x_e.$$

 \textit{Step 3.2: Let $2<\Phi(\lambda)\neq p-2$ or $\Phi(\lambda)=0$.}
  Then by Equation (\ref{935}), we get $\varphi_1(x)=0$.

 \textit{Step 3.3: Let $\Phi(\lambda)=2$ and $x_h\neq0$.}
  Then  by Equation (\ref{935}), we get $a_{\Phi(\lambda)}(x,f)=0$,
which implies $\varphi_1(x)=0$ by Equation (\ref{1436}).

 \textit{Step 3.4: Let $\Phi(\lambda)=2$ and $x_h=0$.} Then $\Phi(\lambda)\neq p-2$, and by definition   we get
		\begin{align*}
			\varphi_1(h)=D_{x,h}(h)=\sum_{i=0}^{2}a_i(x,h)(2-2i)v_i=0,
		\end{align*}
and then $a_i(x,h)=0$ for $i\neq1$.
Hence by definition, we have
$$\varphi_1(x)=a_1(x,h)(x_ee+x_ff)v_1=a_1(x,h)(2x_ev_0+x_fv_2).$$
It follows from Equations (\ref{1435}) and (\ref{1436}) that
		\begin{align*}
			a_0(x,e)x_fv_1=2a_2(x,f)x_ev_1=a_1(x,h)(2x_ev_0+x_fv_2).
		\end{align*}
Then  $a_1(x,h)=0$, that is, $\varphi_1(x)=0$.

 \textit{Step 3.5: Let $\Phi(\lambda)=1$ and $p\neq 3$.}
If $x_h=0$,
then by Equation (\ref{935}) we have $a_0(x,e)=a_1(x,f)=0$,
which implies $\varphi_1(x)=0$.
If $x_h\neq0$, then
		\begin{align*}
			\varphi_1(h)=D_{x,h}(h)=\sum_{i=0}^{1}a_i(x,h)D_{v_i}(h)=\sum_{i=0}^{1}a_i(x,h)(1-2i)v_i=0.
		\end{align*}
Since $1-2i\neq 0$, we get $a_i(x,h)=0,i=0,1$, and then $ \varphi_1(x)=0 $.

 \textit{Step 3.6: Let $\Phi(\lambda)=p-2$ and $p\neq 3$.}
  Then by Equation (\ref{935}), we get
		\begin{align*}
			&a_0(x,e)(x_h(p-2)v_0+x_fv_1)+b_2(x,e)x_fv_0\\
			=&a_{p-2}(x,f)(x_e(p-2)v_{p-3}+(2-p)x_hv_{p-2})+b_1(x,f)x_ev_{p-2}.
		\end{align*}
 As a result, $a_0(x,e)=a_{p-2}(x,f)=0$ since $p\neq3$,
that is, $\varphi_1(x)=0$.	
\end{proof}

\end{document}